\documentclass[12pt]{amsart}

\usepackage[margin=1.15in]{geometry}
\usepackage{amscd,amssymb, amsmath,amsfonts, wasysym, mathrsfs, mathtools,enumerate,hhline,color}
\usepackage{graphicx}
\usepackage[all, cmtip]{xy}

\usepackage{url}

\definecolor{hot}{RGB}{65,105,225}

\usepackage[pagebackref=true,colorlinks=true, linkcolor=hot ,  citecolor=hot, urlcolor=hot]{hyperref}

\theoremstyle{plain}
\newtheorem{theorem}{Theorem}[section]
\newtheorem{prop}[theorem]{Proposition}

\theoremstyle{definition}

\newtheorem{defn}[theorem]{Definition}

\newtheorem*{ex*}{Example}

\newtheorem*{question}{Question}

\newcommand\sO{{\mathcal O}}

\newcommand\sM{{\mathcal M}}

\newcommand\bP{{\mathbb{P}}}
\newcommand\bQ{{\mathbb{Q}}}
\newcommand\bZ{{\mathbb{Z}}}

\newcommand\bC{{\mathbb{C}}}

\newcommand\bN{{\mathbb{N}}}



                  

\DeclareMathOperator{\Char}{Char}

\def\bC{\mathbb{C}}

\def\cD{\mathscr{D}}
\def\hh{\mathbb{H}}

\title[Algebraic surfaces with zero-dimensional cohomology support locus]{Algebraic surfaces with zero-dimensional cohomology support locus}
\begin{document}

\author{Botong Wang}
\email{wang@math.wisc.edu}
\address{Department of Mathematics, University of Wisconsin-Madison, Madison, WI  53706} 
\begin{abstract}
Using the theory of cohomology support locus, we give a necessary condition for the Albanese map of a smooth projective surface being a submersion. More precisely, assuming the cohomology support locus of any finite abelian cover of a smooth projective surface consists of finitely many points, we prove that the surface has trivial first Betti number, or is a ruled surface of genus one, or is an abelian surface. 
\end{abstract}

\date{}

\maketitle
\section{Introduction}
Let $X$ be a connected finite CW-complex. Denote the moduli space of rank one $\bC$-local systems on $X$ by $\Char(X)$. Then $\Char(X)$ is a commutative linear algebraic group (not necessarily connected). As algebraic groups,
$$\Char(X)\cong Hom(\pi_1(X), \bC^*)$$
which is isomorphic to the product of the affine torus $(\bC^*)^{b_1(X)}$ and a finite abelian group. Denote the connected component of $\Char(X)$ containing the origin by $\Char^0(X)$. 

The cohomology support locus of $X$ is defined to be 
$$\Sigma(X):=\{L\in \Char(X)| H^i(X, L)\neq 0, \text{ for some } i\in \bN\}.$$
The cohomology support locus $\Sigma(X)$ is always an algebraic subset of $\Char(X)$. It is a theorem of Carlos Simpson \cite{Simpson} that when $X$ is a smooth projective variety, each irreducible component of $\Sigma(X)$ is a translate of a subtorus by a torsion point. 

We are interested in the varieties with zero-dimensional cohomology support locus. Such condition gives strong consequences of the topology of $X$. For example, if $\Sigma(X)\subsetneq \Char(X)$, then the Euler characteristic $\chi(X)=0$. This is because the Euler characteristic of $X$ can be computed using any rank one local system on $X$, i.e.
$$\chi(X)=\sum_{i\geq 0}(-1)^i\dim H^i(X, L)$$
where $L$ is any rank one local system on $X$. 

Before stating our main theorem, we recall that a connected covering space $X'$ of $X$ is called an abelian cover, if the image of $p_*:\pi_1(X')\to \pi_1(X)$ is a normal subgroup and $\pi_1(X)/p_*(\pi_1(X'))$ is an abelian group. 
\begin{theorem}\label{surface}
Let $X$ be a smooth projective surface over $\bC$. Suppose that for any finite abelian cover $X'$ of $X$, $\Sigma(X')\cap \Char^0(X')$ consists of finitely many points. Then the Albanese map of $X$
$$a_X: X\to Alb(X)$$
is a submersion, or in algebraic terms, smooth and surjective. In particular, one of the following statements applies to $X$,
\begin{itemize}
\item The first Betti number of $X$ is zero. Or equivalently, the Albanese variety of $X$ is a point. 
\item $X$ is a ruled surface of genus one. 
\item $X$ is an abelian surface. 
\end{itemize}
\end{theorem}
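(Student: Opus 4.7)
The plan is to combine two complementary tools: (a) the deformation invariance of the Euler characteristic along families of local systems, which extracts an Euler characteristic vanishing from the hypothesis, and (b) the Simpson--Beauville correspondence between positive-dimensional components of $\Sigma$ and fibrations over higher genus curves, applied across all abelian covers at once. First, I would reduce to the case $b_1(X) > 0$ (otherwise conclusion~(1) is immediate). Under this assumption $\Char^0(X)$ is a positive-dimensional torus, and the finiteness of $\Sigma(X) \cap \Char^0(X)$ means that a generic $L \in \Char^0(X)$ satisfies $H^i(X, L) = 0$ for every $i$. Since $\chi_{\mathrm{top}}(X) = \sum_i (-1)^i \dim H^i(X, L)$ is locally constant on $\Char^0(X)$, one obtains $\chi_{\mathrm{top}}(X) = 0$; the same reasoning gives $\chi_{\mathrm{top}}(X') = 0$ for every finite abelian cover $X'$ with $b_1(X') > 0$.

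The next step is to translate the hypothesis into geometric constraints on fibrations. Suppose some abelian cover $X'$ of $X$ admits a surjection $h : X' \to C$ with connected fibers onto a smooth curve $C$ of genus $\geq 2$. For any nontrivial $L \in \mathrm{Pic}^0(C)$, Serre duality gives $H^1(C, L) \neq 0$ since $\deg K_C > 0$, and the Leray spectral sequence embeds this into $H^1(X', h^* L)$; hence $h^* \mathrm{Pic}^0(C)$ provides a positive-dimensional subtorus inside $\Sigma(X') \cap \Char^0(X')$, contradicting the hypothesis. So no abelian cover of $X$ maps surjectively with connected fibers onto a curve of genus $\geq 2$. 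Feeding this into the Stein factorization $X \xrightarrow{f} Z \xrightarrow{g} A = \mathrm{Alb}(X)$, one deduces: if $\dim Z = 1$, then $g(Z) = 1$ and $g : Z \to A$ is an isogeny of elliptic curves, so after adjusting notation we have a fibration $f : X \to E$ over an elliptic curve; if $\dim Z = 2$, then $a_X$ is generically finite and $A$ is an abelian surface.

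The hardest step, and the main obstacle I anticipate, is promoting these structural statements to the conclusion that $a_X$ is a submersion. For the fibration case the Euler characteristic formula
\[
\chi_{\mathrm{top}}(X) = \chi_{\mathrm{top}}(F_{\mathrm{gen}})\,\chi_{\mathrm{top}}(E) + \sum_{p \in E} \bigl(\chi_{\mathrm{top}}(X_p) - \chi_{\mathrm{top}}(F_{\mathrm{gen}})\bigr),
\]
together with $\chi_{\mathrm{top}}(E) = 0$, $\chi_{\mathrm{top}}(X) = 0$, and the non-negativity of each summand, forces every fiber of $f$ to have the generic Euler characteristic, thereby eliminating genuinely singular fibers. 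Multiple fibers have the same Euler characteristic as the generic fiber, however, and must be removed by a separate argument: if $f$ has a multiple fiber of multiplicity $m$ at $p_0$, I would build a cyclic degree-$m$ cover $E' \to E$ totally ramified at $p_0$ plus enough auxiliary branch points (with orders dividing $m$) to force $g(E') \geq 2$ via Riemann--Hurwitz; the normalization of $X \times_E E'$ is then a finite abelian cover of $X$ that fibers over a curve of genus $\geq 2$, contradicting the previous paragraph. In the generically finite case one runs an analogous argument with an abelian isogeny cover of $A$ branched along the ramification divisor of $a_X$, forcing $a_X$ to be \'etale.

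Once $a_X$ is shown to be a submersion, the theorem follows from a case analysis on $\dim A$: $\dim A = 0$ is conclusion~(1); $\dim A = 2$ makes $X$ an \'etale cover of an abelian surface and hence itself an abelian surface, conclusion~(3); $\dim A = 1$ presents $X$ as a smooth fiber bundle $X \to E$ with fiber $F$ a smooth projective curve. The remaining task is to control $g(F)$. I would pass to an abelian cover of $X$ that trivializes the monodromy of $R^1 f_* \bC$, reducing to a product $F \times E'$, and read off from $\Sigma$ of this product --- combined with further iterated abelian covers as needed --- that $g(F) = 0$, placing $X$ into conclusion~(2). Separating elliptic fiber bundles whose total space is not itself an abelian surface from the listed cases is the subtlest point, and is where I expect the hypothesis on every abelian cover (rather than just $X$ itself) to be used most essentially.
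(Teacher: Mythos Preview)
Your approach via Euler characteristics and the Simpson--Beauville correspondence is quite different from the paper's, and two of your key steps have genuine gaps. The elimination of multiple fibers does not work as stated: if you add auxiliary branch points to $E' \to E$ lying under \emph{smooth} fibers of $f$, the normalized fiber product is ramified over $X$ along those fibers and is not a covering space of $X$ at all. If instead you branch only at multiple-fiber points, the local monodromies of a cyclic cover must sum to zero in $\bZ/m$, so a single multiple fiber cannot be resolved this way; indeed the orbifold fundamental group of an elliptic curve with one cone point of order $m$ has abelianization $\bZ^2$, so no abelian cover of $X$ arising from the base detects it. The generically finite case is likewise not handled: isogenies of abelian varieties are \'etale, so an ``isogeny cover of $A$ branched along the ramification divisor'' does not exist, and pulling back by an actual isogeny $A' \to A$ leaves the ramification of $X' \to A'$ unchanged. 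You also have not excluded curves contracted by $a_X$, i.e., passed from generically finite to finite.

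The paper avoids all of these difficulties by a different mechanism. It first proves (its Theorem~2.1, via Schnell's Fourier--Mukai transform for $\cD$-modules on abelian varieties) that finiteness of $\Sigma(X)\cap\Char^0(X)$ alone forces every $R^i a_{X*}\bC_X$ to be a local system on $Alb(X)$. When $\dim Alb(X)=2$ this immediately makes $a_X$ a finite unramified cover. When $\dim Alb(X)=1$, the paper does not try to rule out multiple fibers directly; instead it combines Simpson's torsion theorem with the abelian-cover hypothesis to show $R^1 a_{X*}\bC_X=0$, so the generic fiber is $\bP^1$, and then the decomposition theorem forces $X$ to be minimal, whence $X$ is a genus-one ruled surface by classification. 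Your final step (trivializing the monodromy on $R^1 f_*\bC$ to reduce to a product $F\times E'$) would also need repair, since trivial cohomological monodromy does not trivialize a fiber bundle; but the paper's route through $R^1 a_{X*}\bC_X=0$ makes that step unnecessary.
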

As we will see in the next section, the local systems in $\Char^0(X)$ have better geometric interpretation. They are pull-backs from the Albanese variety of $X$. 

It is essential that we require $\Sigma(X')\cap \Char^0(X')$ consists of finitely many points for all abelian cover $X'$ of $X$. The next proposition gives a counter-example if we only require $\Sigma(X)\cap \Char^0(X)$ consists of finitely many points. The example is based on an answer on Mathoverflow by Jason Starr and communication with Christian Schnell. 
\begin{prop}\label{counter}
There exists a smooth surface $Y$ such that $\Sigma(Y)\cap \Char^0(Y)$ consists of finitely many points, but the Albanese map of $Y$ is not a submersion. 
\end{prop}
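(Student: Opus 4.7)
The plan is to construct an explicit smooth projective surface $Y$ and verify both required properties. One key preliminary observation severely constrains the construction: for any rank-one local system $L$ on $Y$, one has $\chi(Y,L)=\chi(Y)$ (topologically), so if $\chi(Y)\neq 0$ then every character lies in $\Sigma(Y)$ and $\Sigma(Y)\cap\Char^0(Y)$ coincides with $\Char^0(Y)$, which is infinite as soon as $b_1(Y)>0$. The construction must therefore satisfy $\chi(Y)=0$ and $q(Y)>0$, which places $Y$ close to a minimal surface of Euler characteristic zero (abelian, bielliptic, or ruled over an elliptic curve) or a mild modification thereof.

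A natural first attempt is to set $Y=(C\times E)/G$, where $C$ is a smooth projective curve of genus $g\geq 2$ carrying a group $G$ acting by automorphisms (for instance $G=\mathbf{Z}/2$ acting by the hyperelliptic involution), $E$ is an elliptic curve with $G$ acting by free translations, and $G$ acts diagonally. Then $\chi(Y)=0$, and when the induced $G$-action on $H^1(C,\mathbf{C})$ has no invariants a Leray/K\"unneth computation for the fibration $Y\to E/G$ (exploiting both that $H^*(E/G,M)=0$ for generic $M\in\Char^0(E/G)$ and that the monodromy of $R^1f_*\mathbf{C}$ pairs with the character to kill $H^*$) shows that $\Sigma(Y)\cap\Char^0(Y)$ reduces to a finite collection of characters. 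However, in this free-quotient setup the Albanese map is a smooth fibration over the elliptic curve $E/G$, hence already a submersion, so this is not yet a counter-example. To obtain one, I would modify the construction so as to break the submersiveness without reintroducing positive-dimensional components of $\Sigma(Y)\cap\Char^0(Y)$: candidate modifications include replacing the free $G$-action on $E$ with a carefully chosen non-free action whose quotient is smooth but exhibits multiple fibers (introducing ramification in the Albanese map), or taking $Y$ to be a double cover of an abelian surface branched along a configuration of elliptic curves, with a torsion twist of the square-root line bundle chosen to suppress any translated positive-dimensional subtori in $\Sigma(Y)\cap\Char^0(Y)$.

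The main obstacle is the inherent tension between the two conditions. Typical ways of making the Albanese map non-submersive either push $\chi(Y)$ away from $0$ (blow-ups add $+1$ per point, and singular fibers of an elliptic fibration contribute positively to $\chi$) or introduce positive-dimensional components into $\Sigma(Y)\cap\Char^0(Y)$ (for instance, a translated subtorus coming from a Stein factorization of the relevant cover through a curve of genus at least $2$). The counter-example thus lies in a narrow region of the construction space; the proof would select an explicit $Y$ of one of the types above, verify the finiteness of $\Sigma(Y)\cap\Char^0(Y)$ via an explicit Leray spectral sequence argument combined with an analysis of the direct image sheaves, and exhibit a specific point where the differential of the Albanese map fails to be surjective.
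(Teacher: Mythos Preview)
Your proposal is a strategy outline, not a proof: you never actually pin down a specific $Y$ and verify the two required properties. That is the gap. Your instincts are good---the constraints $\chi(Y)=0$, $q(Y)>0$, and ``multiple fibers rather than singular fibers'' are exactly right, and the paper's example is indeed a free $\mathbf{Z}/2$-quotient of an elliptic surface---but two concrete ideas are missing from your sketch.

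First, in your $(C\times E)/G$ setup you are looking at the wrong projection. The paper arranges for the Albanese map to be the projection to $C/G$, not to $E/G$: one takes $C$ to be a genus-$2$ double cover $\pi:C\to E$ of an elliptic curve branched at two points, so that $C/\sigma=E$ is elliptic, and the elliptic factor $E_0=\mathbf{C}/\mathbf{Z}[i]$ sits in the \emph{fiber} direction with $G$ acting by a free half-period translation $\tau$. The composite involution $\sigma\circ\tau$ is then free on the total space, and the resulting map $Y\to E$ acquires multiplicity-$2$ fibers over the two branch points, so it is not a submersion. Second---and this is the step your outline does not anticipate---the naive product $C\times E_0$ does not work: since translation acts trivially on $H^1(E_0)$, one gets $q(Y)=2$ and the $E_0$-direction contributes a positive-dimensional subtorus to $\Sigma(Y)\cap\Char^0(Y)$. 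The paper fixes this by replacing $C\times E_0$ with a \emph{twisted} isotrivial family $\tilde Y\to C$, built from a nontrivial $2$-torsion rank-one $\mathbf{Z}$-local system $V_{\mathbf Z}$ on $E$; the monodromy $-1$ on $H^1$ of the fiber kills the fiberwise contribution to $H^1(Y,\mathbf{Q})$, forcing $q(Y)=1$ and $\mathrm{Alb}(Y)=E$. One then checks directly that each $R\phi^i_*\mathbf{C}_Y$ is a local system on $E$, and finiteness of $\Sigma(Y)\cap\Char^0(Y)$ follows from the identification $\Sigma(Y)\cap\Char^0(Y)=\phi^\star\Sigma(E,R\phi_*\mathbf{C}_Y)$ together with the decomposition theorem.
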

Our example of such $Y$ has torsions in $H_1(Y, \bZ)$. Hence $\Char(Y)$ has multiple connected components. In fact, one can show that $\Sigma(Y)$ contains a connected component of $\Char(Y)$, which is not $\Char^0(Y)$. We don't know whether we can replace $\Sigma(Y)\cap \Char^0(Y)$ by $\Sigma(Y)$ in the preceding proposition. Moreover, we don't know how much of our main theorem generalizes to higher dimension. So let us conclude the introduction with two questions. 
\begin{question}
Does there exist a smooth surface $Y$ such that $\Sigma(Y)$ consists of finitely many points, but the Albanese map of $Y$ is not a submersion?
\end{question}
\begin{question}
Does Theorem \ref{surface} generalize to higher dimension?
\end{question}


\section{Cohomology support locus and Fourier-Mukai transformation}\label{pushforward}
Let $X$ be a smooth projective variety over $\bC$. The following theorem is proved implicitly by Christian Schnell in \cite{Schnell}. 
\begin{theorem}\label{localsystem}
Suppose $\Sigma(X)\cap \Char^0(X)$ consists of finitely many points. Then the Albanese map $a_X: X\to Alb(X)$ is surjective, and $Ra^i_{X*}(\bC_X)$ are local systems on $X$ for all $i\in \bN$. 
\end{theorem}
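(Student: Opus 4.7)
The plan is to transfer the hypothesis to the pushforward $K := Ra_{X*}\bC_X$ on $A := \mathrm{Alb}(X)$ and invoke Schnell's structure theorem for complexes with small cohomology support locus on abelian varieties.

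First, pullback along $a_X$ identifies $\Char(A)$ with $\Char^0(X)$: every rank-one local system in the identity component factors through the torsion-free quotient $\pi_1(X)^{\mathrm{ab}}/\mathrm{tors} = \pi_1(A)$. The projection formula gives $H^i(X, a_X^* L) \cong H^i(A, K \otimes L)$ for every $L \in \Char(A)$, so the hypothesis is equivalent to the cohomology support locus $\Sigma(K) \subset \Char(A)$ being zero-dimensional.

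Next, I would apply the Beilinson--Bernstein--Deligne decomposition theorem to the proper map $a_X$. Since $\bC_X[\dim X]$ is a simple self-dual perverse sheaf on $X$, this yields
$$K[\dim X] \;\cong\; \bigoplus_\alpha \IC_{Z_\alpha}(L_\alpha)[\ell_\alpha],$$
and each simple summand must itself have zero-dimensional cohomology support locus. A skyscraper summand ($Z_\alpha$ a point) has $\Sigma = \Char(A)$, so is excluded. A summand with $Z_\alpha = A$ is a shifted local system $L_\alpha[\dim A]$; since $\pi_1(A)$ is abelian, simplicity forces $L_\alpha$ to be rank one, contributing the single point $L_\alpha^{-1}$ to $\Sigma$. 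The intermediate case, where $Z_\alpha$ is a proper positive-dimensional subvariety, is excluded by Schnell's structure result, which forces $\dim \Sigma(\IC_{Z_\alpha}(L_\alpha)) > 0$ whenever $Z_\alpha \subsetneq A$ has positive dimension. Thus $K$ is a direct sum of shifted rank-one local systems on $A$, so each $R^i a_{X*}\bC_X$ is a local system on $A$; and since at least one summand has full support $A$, the map $a_X$ is surjective.

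The principal obstacle is excluding the intermediate summands: showing that $\IC_Z(L)$ has positive-dimensional cohomology support locus whenever $0 < \dim Z < \dim A$. When $Z$ lies in a coset of a proper abelian subvariety this is transparent, as the kernel of restriction to that subvariety is a positive-dimensional subtorus of $\Char(A)$ annihilating the cohomology. The general case requires Ueno's theorem on subvarieties of abelian varieties together with the generic-vanishing / Fourier--Mukai machinery developed in Schnell's paper; this is the essential input encapsulated by the citation, and the place where the proof sits on genuinely deep ground rather than formal manipulation.
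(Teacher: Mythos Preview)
Your argument is correct, but it takes a genuinely different route from the paper's. You invoke the decomposition theorem to split $Ra_{X*}\bC_X$ into shifted simple perverse summands on $A$, and then eliminate any summand whose support is strictly between a point and all of $A$ by appealing to Schnell's structure theorem as a black box. The paper bypasses the decomposition theorem entirely: after the same reduction via the projection formula, it passes through Riemann--Hilbert to the complex of $\cD_A$-modules $a_*(\sO_X)$, notes that its Fourier--Mukai transform on $A^\natural$ has zero-dimensional support, and then shows by a short truncation induction that \emph{any} object of $D^b_{coh}(\cD_A)$ whose Fourier--Mukai transform is supported on finitely many points has flat-bundle cohomology sheaves (the key input being that $FM_A^{-1}$ carries a skyscraper to a rank-one flat bundle in degree zero). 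Your approach is more structural and easier to summarize, but it defers precisely the substantive step to a citation; the paper's argument is more explicit and, in effect, \emph{proves} the lemma you need for your intermediate exclusion---applied to a simple $\cD_A$-module with finite support locus, it shows the module is a rank-one flat bundle, hence cannot have intermediate support.
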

Before proving the theorem, we review some of the basic ideas of \cite{Schnell}. For simplicity of notations, we will simply denote $a_X$ and $Alb(X)$ by $a$ and $A$ respectively. 

Since the Albanese map $a$ induces an isomorphism $H_1(X, \bQ)\cong H_1(A, \bQ)$, the map $a^\star: \Char(A)\to \Char(X)$ induces an isomorphism between $\Char(A)$ and $\Char^0(X)$. Now, we define the relative cohomology support locus.

\begin{defn}
Let $F^\bullet$ be a constructible complex on $A$. We define
$$\Sigma(A, F^\bullet)=\{L\in \Char(A)|\mathbb{H}^i(A, F^\bullet\otimes_{\bC}L)\neq 0 \text{ for some } i\in \bN\}$$
where $\mathbb{H}^i$ denotes the $i$-th hypercohomology group. 
\end{defn}

Next, we will review some of the arguments in \cite{Schnell}. Fix the smooth projective variety $X$. Let $A^{\natural}$ be the moduli space of rank one flat bundles on $A$. Then analytically $A^\natural$ is isomorphic to $\Char(A)$, but algebraically they are different. The isomorphism between $A^\natural$ and $\Char(A)$ is given by the Riemann-Hilbert correspondence between flat bundles and local systems.We denote this analytic isomorphism by $\Phi: A^\natural\to \Char(A)$.
\begin{defn}\label{csd}
In $A^\natural$, we can define the relative cohomology support locus of a complex of holonomic $\cD_A$-modules $\sM^\bullet$ by
\begin{equation*}
S(A, \sM^\bullet):=\{(E, \nabla)\in A^{\natural}| \hh^i(A, DR_A(A, \sM^\bullet\otimes_{\sO_A}(E, \nabla)))\neq 0 \text{ for some }i\in \bZ\}.
\end{equation*}
Here $DR_A: D^b_h(\cD_A)\to D^b_c(\bC_A)$ is the Riemann-Hilbert functor from the bounded derived category of holonomic $\cD_A$ modules to the bounded derived category of constructible complexes on $A$.
\end{defn}

The Fourier-Mukai transformation of algebraic D-modules is introduced by Laumon \cite{Laumon} and Rothstein \cite{Rothstein}. It is an equivalence of categories,
$$FM_A: D^b_{coh}(\cD_A)\to D^b_{coh}(\sO_{A^\natural})$$
between the bounded derived category of coherent $\cD_A$-modules and the bounded derived category of coherent sheaves on $A^\natural$. 

The Fourier-Mukai transformation is useful for the study of cohomology support locus, because it follows from base change theorem that 
\begin{equation}\label{equal}
Supp\left(FM_A\left(\sM^\bullet\right)\right)=S(A, \sM^\bullet)
\end{equation}
where by definition
$$Supp(FM_A(\sM^\bullet))=\bigcup_{i\in \bZ}Supp\left(H^i(FM_A(\sM^\bullet))\right).$$

\begin{proof}[Proof of Theorem \ref{localsystem}]
Notice that in Definition \ref{csd}, the cohomology support locus relative to a complex of D-modules is defined via the corresponding constructible complex. Thus,
\begin{equation}\label{iso}
\Phi(S(A, \sM^\bullet))=\Sigma(A, DR_A(\sM^\bullet)).
\end{equation}
Here we recall that $\Phi: A^\natural\to \Char(A)$ is the Riemann-Hilbert correspondence between rank one flat bundles and rank one local systems. 

Given a rank one local system $L_A$ on $A$, 
$$H^i(X, a^*(L_A))\cong \hh^i(A, Ra_*(a^*(L_A)))\cong \hh^i(A, Ra_*(\bC_X)\otimes_{\bC} L_A)$$
where the first isomorphism is because derived push-forward is associative, and the second isomorphism follows from the projection formula. Therefore,
$$\Sigma(X)\cap a^\star(\Char(A))=a^\star\left(\Sigma\left(A, Ra_*(\bC_X)\right)\right)$$
where the pull-back map $a^\star: \Char(A)\to \Char(X)$ induces an isomorphism between $\Char(A)$ and $\Char^0(X)$. 

By assumption, $\Sigma(X)\cap \Char^0(X)$ consists of finitely many points. Thus, $\Sigma(A, Ra_*(\bC_X))$ also consists of finitely many points. Since the Riemann-Hilbert correspondence is compatible with pushforward, 
\begin{equation}\label{astar}
Ra_*(\bC_X)\cong Ra_*(DR_X(\sO_X))\cong DR_A(a_*(\sO_X))
\end{equation}
where the last pushforward $a_*: D^b_h(\cD_X)\to D^b_h(\cD_A)$ is the pushforward in the derived category. 

Therefore, we have 
$$\Sigma(A, Ra_*(\bC_X))=\Sigma\left(A, DR_A(a_*(\sO_X))\right)=\Phi(S(A, a_*(\sO_X)))$$
where the last equality follows from (\ref{iso}). Since $\Phi$ is an analytic isomorphism, and since $\Sigma(A, Ra_*(\bC_X))$ consists of finitely many points, $S(A, a_*(\sO_X))$ also consists of finitely many points. By (\ref{equal}), this means that the cohomology sheaves of $FM_A(a_*(\sO_X))$ are all skyscraper sheaves. By (\ref{astar}), to show $Ra^i_{*}(\bC_X)$ are local systems on $A$ for all $i$, it suffices to show that $H^i(a_*(\sO_X))$ are smooth $\cD_A$-modules for all $i$. 

Since $FM_A: D^b_{coh}(\cD_A)\to D^b_{coh}(\sO_{A^\natural})$ is an equivalence of category, to prove the theorem, it suffices to show that for any $C^\bullet\in D^b_{coh}(\sO_{A^\natural})$ which is supported on finitely many points in $A^\natural$, $H^i(FM_A^{-1}(C^\bullet))$ are local systems on $A$. Here $FM_A^{-1}$ is the quasi-inverse of the Fourier-Mukai functor (see \cite[Section 3]{Laumon}). 

By the definition of $FM_A^{-1}$, when $N$ is a coherent skyscraper sheaf on $A^\natural$ considered as a complex in degree zero, $FM_A^{-1}(N)$ is a flat bundle on $A$ considered as a complex in degree zero. Apply $FM_A^{-1}$ to the the sequence,
$$\cdots \tau^{\geq 1}(C^\bullet)\to \tau^{\geq 0}(C^\bullet)\to \tau^{\geq -1}(C^\bullet)\to\cdots$$
 we obtain a sequence of objects in $D^b_h(\cD_A)$,
$$\cdots\to\sM_{1}^\bullet\to \sM_{0}^\bullet\to \sM_{-1}^\bullet\to \cdots$$
where $\sM_i^\bullet=0$ when $i\gg 0$ and $\sM_i^\bullet=FM_A^{-1}(C^\bullet)$ when $i\ll 0$.

Since for any $i$ the mapping cone of $\tau^{\geq i+1}(C^\bullet)\to \tau^{\geq i}(
C^\bullet)$ is a skyscraper sheaf considered as a complex in degree $i$, the mapping cone of $M^\bullet_{i+1}\to M_i^\bullet$ is a local system considered as a complex in degree $i$. Using induction on 
$$\min\{i\;|\;\sM_i^\bullet=FM_A^{-1}(C^\bullet)\}-\max\{i\;|\;\sM_i^\bullet=0\}$$
one can easily prove that the cohomology of $FM^{-1}_A(C^\bullet)$ are local systems on $A$. Therefore, the theorem follows.
\end{proof}

\section{Proof of Theorem \ref{surface}}
We will prove Theorem \ref{surface} in this section. Let $X$ be a smooth projective surface over $\bC$ throughout the section. As in Theorem \ref{surface}, we will assume that for any finite abelian cover $X'$ of $X$, $\Sigma(X')\cap \Char^0(X')$ consists of finitely many points. 

Since the cohomology support locus $\Sigma(X)\cap \Char^0(X)$ consists of finitely many points, by Theorem \ref{localsystem}, $Ra^i_*(\bC_X)$ are local systems on $X$ for all $i$. If $Alb(X)$ has dimension at least two, $Ra^0_*(\bC_X)$ being a local system implies that the number of connected components of $a^{-1}(t)$ is constant for $t\in Alb(X)$. Thus, the Albanese map $a: X\to Alb(X)$ is surjective, and hence $\dim Alb(X)=2$. Since $\dim X=\dim Alb(X)=2$, $Ra^i_*(\bC_X)$ are supported on a proper subvariety of $Alb(X)$ for $i\geq 1$. Thus, $Ra^i_*(\bC_X)=0$ for all $i\geq 1$. In other words, $a: X\to Alb(X)$ cannot have any positive dimensional fiber. Since $X$ is projective, $a: X\to Alb(X)$ being quasi-finite implies that it is a finite morphism. Now, since $Ra^0_*(\bC_X)$ is a local system, every fiber of $a: X\to Alb(X)$ has the same number of points. Thus, $a: X\to Alb(X)$ is a finite unramified covering map. Thus, $X$ is an abelian surface. 

Now, suppose $Alb(X)$ has dimension one. Consider the Stein factorization $X\stackrel{f}{\to} Z\stackrel{g}{\to} Alb(X)$ of $a: X\to Alb(X)$. Since $f$ has connected fibers,
$$Ra^0_*(\bC_X)=R(g\circ f)^0_*(\bC_X)=Rg^0_*(Rf^0_*(\bC_X))=Rg^0_*(\bC_Z).$$
Since $Ra^0_*(\bC_X)$ is a local system on $Alb(X)$, by the argument in the previous paragraph, $g: Z\to Alb(X)$ is an unramified covering map. Thus, $Z$ is an abelian variety. By the universality of the Albanese variety, $g: Z\to Alb(X)$ is an isomorphism. Thus, $a: X\to Alb(X)$ has connected fibers. 

Next, we will show that $Ra^1_*(\bC_X)$ is trivial. Since $\pi_1(Alb(X))$ is abelian, every local systems on $Alb(X)$ is an extension of rank one local systems. According to Simpson's theorem \cite{Simpson}, $\Sigma(X)\cap \Char^0(X)$ consists of torsion points only. Thus, $Ra^1_*(\bC_X)$ is an extension of torsion local systems. Suppose $Ra^1_*(\bC_X)\neq 0$. Then there exists a finite cover $p: A'\to Alb(X)$, such that $p^*(Ra^1_*(\bC_X))$ has at least one global section. Let $X'=X\times_{Alb(X)} A'$. By choosing $p$ to be minimal, we can assume that $X'$ is connected. Then $X'$ is an abelian cover of $X$. Moreover, by projection formula, $Ra'^1_*(\bC_{X'})\cong p^*(Ra^1_*(\bC_X))$, where $a': X' \to A'$ is the projection. Now, by the degeneration of the Leray spectral sequence \cite{Deligne}, 
$$\dim_{\bC} H^1(X', \bC_{X'})=\dim_{\bC} H^1(A', Ra'^0_*(\bC_{X'}))+\dim_{\bC} H^0(A', Ra'^1_*(\bC_{X'})).$$
Since $p^*(Ra^1_*(\bC_X))$ has at least one global section, and since $Ra'^1_*(\bC_{X'})\cong p^*(Ra^1_*(\bC_X))$,
$$\dim_{\bC} H^0(A', Ra'^1_*(\bC_{X'}))=\dim_{\bC} H^0(A', p^*(Ra^1_*(\bC_X)))\geq 1.$$
Since $a: X\to Alb(X)$ has connected fibers, so does $a': X'\to A'$, and hence
$$\dim_{\bC} H^1(A', Ra'^0_*(\bC_{X'}))=\dim_\bC H^1(A', \bC_{A'})=2.$$
Therefore, $\dim_\bC H^1(X', \bC_{X'})\geq 3$, and hence $Alb(X')$ has dimension at least two. By the previous arguments, $\Sigma(X')\cap \Char(X')$ consisting of finitely many points implies that $X'$ is an abelian surface. Thus, $X$ is an abelian surface too. This is a contradiction to the assumption that $Alb(X)$ has dimension one. Hence, $Ra^1_*(\bC_X)=0$. 

Thus, we have shown that the general fiber of $a: X\to Alb(X)$ is isomorphic to $\bP^1$. Next, we show that $X$ is a minimal surface. In fact, suppose $X$ is not minimal, i.e., there exists a blow-down map $f: X\to X_0$. Then the Albanese map $a: X\to Alb(X)$ factors as $X\stackrel{f}{\to} X_0\stackrel{g}{\to} Alb(X)$. Since $f$ is a blow-down map, $Rf^2_*(\bC_X)$ has a skyscraper sheaf as a direct summand. In fact, by the decomposition theorem \cite{BBD}, $Rf_*(\bC_X)$ has a direct summand, which is supported at a point. The image of this direct summand under $Rg_*$ will be a direct summand of $Rg_*(Rf_*(\bC_X))\cong Ra_*(\bC_X)$. This contradicts that $Ra^i_*(\bC_X)$ are local systems for all $i$. So $X$ is a minimal surface. 

By the adjunction formula, the canonical bundle of $X$ is not nef. By \cite[VI, Proposition 3.3]{BHPV}, $X$ is either $\bP^2$ or a ruled surface. Since $Alb(X)$ has dimension one, $X$ is a ruled surface of genus one. 

The last case is when $Alb(X)$ has dimension zero. The statement of Theorem \ref{surface} is vacant in this case. 
\section{Constructing examples for Proposition \ref{counter}}
We will construct an example to show Proposition \ref{counter}. 

Let $\pi: C\to E$ be a double cover of an elliptic curve $E$ ramified at two points $P$ and $Q$. Let $\sigma: C\to C$ be the Galois map relative to $\pi: C\to E$. Let $V_\bZ$ be a rank one $\bZ$ coefficient local system on $E$ such that $V_\bZ$ is nontrivial, but $V_\bZ\otimes_\bZ V_\bZ$ is trivial. Denote the $\bC$ coefficient local system $V_\bZ\otimes_\bZ \bC$ by $V_\bC$. Equivalently, we can define $V_\bC$ as a non-trivial rank one $\bC$-local system on $E$, whose monodromy group is $\{1, -1\}$, and we define $V_\bZ$ as the $\bZ$-sections of $V_\bC$.   Denote $\pi^*(V_\bZ)$ and $\pi^*(V_\bC)$ by $W_\bZ$ and $W_\bC$ respectively. Clearly, $W_\bC/(W_\bZ\otimes_\bZ \bZ[\sqrt{-1}])$ is a principal $\bC/\bZ[\sqrt{-1}]$-bundle over $C$ with a zero section. Here $\bZ[\sqrt{-1}]=\bZ\oplus \bZ\cdot\sqrt{-1}$ is the subring generated by $\sqrt{-1}$ over $\bZ$. Denote the total space of this principal bundle by $\tilde{Y}$. Then $\tilde{Y}$ is an isotrivial family of elliptic curves over $C$ with a zero section. 

On $\tilde{Y}$, there are a few order two actions. The first one is induced by the Galois action $\sigma$, which by abusing notations we also denote by $\sigma$. Notice that ``$\pm \frac{1}{2}$" is a double section of $V_\bC$, where the two sections differ by a translation by $V_\bZ$. Thus, it descents to a global section of $\tilde{Y}\to C$, which we denote by $s$. Then addition by $s$ along the fibers defines a fixed point free action on $\tilde{Y}$ of order two. We denote the second action by $\tau$. Evidently, the two actions $\sigma$ and $\tau$ commute with each other. Thus, $\sigma\circ \tau$ is also an action of order two on $\tilde{Y}$, which is obviously fixed point free. We define $Y$ to be the quotient of $\tilde{Y}$ by the action $\sigma\circ\tau$. 

Since $\sigma\circ\tau$ preserves each fiber of $\tilde{Y}\to E$, we have a map $\phi: Y\to E$. It is easy to check that away from $P$ and $Q$, $\phi: Y\to E$ is smooth with fiber isomorphic to the elliptic curve $\bC/\bZ[\sqrt{-1}]$. However, over $P$ and $Q$, $\phi$ has non-reduced fiber. The fibers over $P$ and $Q$ have multiplicity two and are isomorphic to $\bC/(\bZ[\sqrt{-1}]+\frac{1}{2}\bZ[\sqrt{-1}])$. 

Notice that the map $\phi: Y\to E$ defined above is the Albanese map of $Y$. In fact, let $F$ be a general fiber of $\phi$. One can easily check that the map $H_1(F, \bQ)\to H_1(Y, \bQ)$ induced by the inclusion is zero. Therefore, the Albanese map contracts the fiber $F$ to a point. Thus, $\phi: Y\to E$ is the Albanese map of $Y$. It follows from straightforward computation that $R\phi^i_*(\bC_Y)$ are local systems on $E$ for all $i$. By the decomposition theorem \cite{BBD}, this implies that 
$$R\phi_*(\bC_Y)\cong \bigoplus_{i\geq 0} R\phi^i_*(\bC_Y)[i].$$
Notice that any local system on an elliptic curve is obtained by extensions of rank one local systems. Therefore, by the arguments in Section \ref{pushforward}, $\Sigma(Y)\cap \Char^0(Y)=\phi^\star \Sigma(E, R\phi_*(\bC_Y))$ consists of finitely many points. 

\subsection*{Acknowledgements}
We thank Sai-Kee Yeung for valuable discussions. The main theorem is in response to one of his questions. We also thank Donu Arapura, Christian Schnell, Partha Solapurkar and Jason Starr for helpful communications.

\end{document}